\theoremstyle{plain}
\newtheorem{thm}{Theorem}
\newtheorem{lem}[thm]{Lemma}
\newtheorem{cor}[thm]{Corollary}
\theoremstyle{definition}
\newtheorem*{Ack}{Acknowledgement}
\newtheorem{note}[thm]{}
\theoremstyle{remark}
\newtheorem{rem}[thm]{Remark}
\def\nxn{n{\times}n}
\def\A{\overline{A}}
\def\Sem{\overline{S}}
\def\x{\overline{x}}
\def\y{\overline{y}}
\begin{document}

\title[Free subsemigroups]{On free subsemigroups \\ of associative algebras}

\author{Edward S. Letzter}

\address{Department of Mathematics\\
        Temple University \\ Philadelphia, PA 19122}
      
      \email{letzter@temple.edu}

      \begin{abstract} 
        In 1992, following earlier conjectures of Lichtman and
        Makar-Limanov, Klein conjectured that a noncommutative domain
        must contain a free, multiplicative, noncyclic
        subsemigroup. He verified the conjecture when the center is
        uncountable. In this note we consider the existence (or not)
        of free subsemigroups in associative $k$-algebras $R$, where
        $k$ is a field not algebraic over a finite subfield. We show
        that $R$ contains a free noncyclic subsemigroup in the
        following cases: (1) $R$ satisfies a polynomial identity and
        is noncommutative modulo its prime radical. (2) $R$ has at
        least one nonartinian primitive subquotient. (3) $k$ is
        uncountable and $R$ is noncommutative modulo its Jacobson
        radical. In particular, (1) and (2) verify Klein's conjecture
        for numerous well known classes of domains, over countable
        fields, not covered in the prior literature.
\end{abstract}

\keywords{Subsemigroup, free semigroup, associative algebra}

\subjclass[2010]{Primary: 20M25, 16U99. Secondary: 20M05.}

\maketitle 

%%%%%%%%%%%%%%%%%%%%%%%%%%%%%%%%%%-Begin-Body-%%%%%%%%%%%%%%%%%%%%%%%%%%%%%%%%%

In 1977, Lichtman conjectured that the group of units of a
noncommutative division algebra always contains a noncyclic free
subgroup \cite{Lic-one}. Since then, an extensive, broad, and ongoing
literature has developed on the existence of free subobjects of
associative algebras. The reader is referred, e.g., to \cite{Gon-Shi}
for an overview of this literature and, e.g., to \cite{Fer-For-Gon}
and references therein for more recent results.

Our focus in this note is on free subsemigroups of associative
algebras. (Henceforth, references to free subsemigroups and free
subgroups will assume them to be multiplicative and noncyclic.) In
1984, Makar-Limanov conjectured that a noncommutative division algebra
must contain a free subsemigroup, and he proved the same for division
algebras over uncountable fields \cite{Mak}. In 1992, Klein proved
that a noncommutative domain with an uncountable center must contain a
free subsemigroup, and he conjectured the same for all noncommutative
domains \cite{Kle}.  Chiba proved in 1995 that a polynomial extension
of a division algebra must contain a free subsemigroup (and also that a
division algebra over an uncountable field must contain a free
subgroup) \cite{Chi}.  In 1996, Reichstein showed that an algebra over
an uncountable field contains a free subsemigroup (or
subalgebra) if the same holds true after an extension of the scalar
field \cite{Rei} (cf.~Smoktunowicz's constructions \cite{Smo}).

Our primary aim, then, is to establish the existence of free
subsemigroups for some well-known classes of algebras not covered in
the prior literature.

\begin{note} \emph{Setup.} Throughout, $k$ will denote a field not
  algebraic over a finite subfield. All mention of algebras, rings,
  subrings, and subalgebras will assume them to be associative and 
  unital. Finitely generated $k$-algebras will be referred to as
  \emph{$k$-affine}.
\end{note} 

  We begin with an elementary but useful ``specialization'' lemma,
  adapted from Passman \cite[\S 2]{Pas-one}.

\begin{lem} \label{subfactor} Let $A$ be a subring of a ring $R$, and
  suppose there exists a surjective ring homomorphism
  $\varphi\colon A \rightarrow \A$. If $\A$ contains a free 
  subsemigroup then so does $R$. 
\end{lem}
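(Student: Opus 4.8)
The plan is to lift a pair of free generators from $\A$ back into $A\subseteq R$ and to argue that no multiplicative relation can survive the lift. Concretely, suppose $\x,\y\in\A$ generate a free subsemigroup. Because $\varphi$ is surjective, I can choose preimages $x,y\in A$ with $\varphi(x)=\x$ and $\varphi(y)=\y$. Since $x,y\in A\subseteq R$, it then suffices to show that $x$ and $y$ generate a free subsemigroup of $A$, as this subsemigroup automatically lies inside $R$.

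The key observation is that $\varphi$, being a ring homomorphism, restricts to a homomorphism of the underlying multiplicative semigroups. Hence for every nonempty word $w$ in two noncommuting variables we have $\varphi\bigl(w(x,y)\bigr)=w(\x,\y)$. I would then argue by contradiction: if $x,y$ failed to generate a free subsemigroup, there would exist two distinct words $w_1\neq w_2$ with $w_1(x,y)=w_2(x,y)$ in $R$. Applying $\varphi$ yields $w_1(\x,\y)=w_2(\x,\y)$, contradicting the freeness of $\x,\y$ in $\A$. Therefore $x$ and $y$ generate a free subsemigroup of $R$.

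I do not anticipate a serious obstacle: the entire content is that freeness of a generating pair is detected by the injectivity of the associated word map, and this injectivity is automatically inherited by any preimage under $\varphi$, since the injective word map of the images factors through it. The only points requiring mild care are that surjectivity is used precisely to produce the lifts $x,y$, and the convention that ``free subsemigroup'' refers to nonempty words in two noncommuting generators, so that the word-map comparison is literally a statement about equal products in $R$ mapping to equal products in $\A$.
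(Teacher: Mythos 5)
Your proposal is correct and is essentially the paper's own argument: both lift free generators $\x,\y$ to preimages $x,y\in A$ and observe that any nontrivial word relation between $x$ and $y$ would push forward under $\varphi$ to a relation in $\Sem$, which is impossible. The paper phrases this via the universal property of the free semigroup (the surjection $S\twoheadrightarrow\Sem$ forces $S\cong\Sem$), while you phrase it element-by-element with word maps, but the content is identical.
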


\begin{proof} Choose $\x, \y \in \A$ that generate a  free
  subsemigroup $\Sem$ of $\A$. Choose $x \in \varphi^{-1}(\x)$ and
  $y \in \varphi^{-1}(\y)$. Letting $S$ denote the subsemigroup of $A$
  generated by $x$ and $y$, we see that $\varphi$ restricts to a
  surjective semigroup homomorphism, from $S$ onto $\Sem$, mapping $x$ to
  $\x$ and $y$ to $\y$. By universality, $S$ must be isomorphic to
  $\Sem$, and so $S$ is a free  subsemigroup of $R$.
\end{proof}

\begin{note} \label{alternative} Let $n$ be an integer $> 1$. In view
  of the Tits Alternative \cite{Tit}, and recalling that $k$ is not
  algebraic over a finite subfield, we can conclude that $GL_n(k)$
  contains a free subgroup. Therefore, the (full) matrix algebra
  $M_n(k)$ contains a free subgroup, as does the algebra of $\nxn$
  matrices over any $k$-algebra. Lichtman \cite{Lic-two} and
  Gon\c{c}alves \cite{Gon} used the Tits Alternative to verify that a
  noncommutative division algebra finite dimensional over its center
  must contain a free subgroup. We can now conclude that a
  noncommutative central simple algebra, finite dimensional over a
  field extension of $k$, must contain a free subgroup.
\end{note}

The proof of the following employs a reduction to the $k$-affine case,
and I am grateful to Ken Brown for this approach.

\begin{thm} \label{PI} Let $R$ be a $k$-algebra satisfying a
  polynomial identity, and suppose that $R$ is noncommutative modulo
  its prime radical. Then $R$ contains a free subsemigroup.
\end{thm}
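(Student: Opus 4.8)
The plan is to reduce the statement, in two steps, to the case of a noncommutative central simple algebra that is finite dimensional over a field extension of $k$: for such an algebra Note~\ref{alternative} already supplies a free subgroup, hence a free subsemigroup, and Lemma~\ref{subfactor} then transports this back to $R$. First I would reduce to a prime quotient. Since $R$ is noncommutative modulo its prime radical $N$, there are $a,b\in R$ with $[a,b]\notin N$, so $[a,b]\notin P$ for some prime ideal $P$; then $T:=R/P$ is a prime PI $k$-algebra that is noncommutative. By Lemma~\ref{subfactor} (applied to $A=R$ and $R\to T$) it suffices to find a free subsemigroup in $T$.

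The reduction to the $k$-affine case comes next. By Posner's theorem the central localization of $T$ is a central simple algebra of degree $n$ over $K=\operatorname{Frac}Z(T)$, with $n\ge 2$ as $T$ is noncommutative; extending scalars to a splitting field $L\supseteq K$ embeds $T$ into $M_n(L)$ with $L$-span all of $M_n(L)$. Hence $T$ is Zariski dense in $M_n(L)$ (recall $k$, and so $T$, is infinite), so the commutators of $T$ are dense in the space $\mathfrak{sl}_n(L)$ of trace-zero matrices. As the nilpotent matrices form a proper closed cone, some commutator $c=[x,y]$ with $x,y\in T$ is \emph{not} nilpotent. Then $A:=k\langle x,y\rangle$ is $k$-affine and satisfies a polynomial identity, and since $c$ is non-nilpotent it lies outside the (nil) prime radical of $A$; thus $A$ is noncommutative modulo its prime radical. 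Applying Lemma~\ref{subfactor} once more, it is enough to exhibit a free subsemigroup in the $k$-affine algebra $A$.

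For the affine case I would pass to a suitable prime quotient. Choose a prime $Q$ of $A$ with $c\notin Q$, so $\bar A:=A/Q$ is a prime $k$-affine PI algebra that is noncommutative. Its Jacobson radical is nil, by Amitsur's theorem; but a nonzero nil ideal of a prime PI algebra would, by Rowen's theorem, meet the center in a nonzero nilpotent central element and hence produce a nonzero nilpotent ideal, contradicting primeness. So $\bar A$ is semiprimitive, and being noncommutative it must have a noncommutative primitive quotient $\bar A/P$. By Kaplansky's theorem $\bar A/P$ is simple Artinian, and by the Nullstellensatz for affine PI algebras it is finite dimensional over its center, a finite extension $F$ of $k$. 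Thus $\bar A/P$ is a noncommutative central simple algebra, finite dimensional over a field extension of $k$; Note~\ref{alternative} supplies a free subgroup, and Lemma~\ref{subfactor}, applied to the composite surjection $A\twoheadrightarrow\bar A\twoheadrightarrow\bar A/P$, yields a free subsemigroup in $R$.

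The step I expect to be the crux is guaranteeing that the finite-dimensional algebra finally reached is \emph{simple}, and not merely noncommutative. This is essential, because a noncommutative finite-dimensional algebra can fail to contain any free subsemigroup at all: in a noncommutative \emph{local} finite-dimensional $k$-algebra with residue division ring $k$, every non-unit is nilpotent and the unit group is nilpotent (the scalars $k^\times$ are central and $1+J$ is unipotent, $J$ the radical), so by a growth comparison it contains no free subsemigroup. Hence one cannot hope to succeed through an arbitrary noncommutative affine subalgebra, where the noncommutativity may hide entirely in the radical. The function of primeness — secured by insisting on a \emph{non-nilpotent} commutator, which survives in a prime quotient — is precisely to force semiprimitivity and thereby a genuine simple (matrix-type) quotient, the setting in which Note~\ref{alternative}, and ultimately the Tits alternative together with the hypothesis that $k$ is not algebraic over a finite subfield, can be brought to bear.
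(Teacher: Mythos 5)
Your argument is correct, but it takes a genuinely different route through the crucial middle step. Both proofs share the same skeleton: pass to a quotient in which a commutator survives, descend to a $k$-affine PI subalgebra whose Jacobson radical is nil by Amitsur's theorem, locate a noncommutative primitive quotient, and finish with Kaplansky's theorem, (\ref{alternative}), and (\ref{subfactor}). The difference lies in how the non-nilpotent witness is produced. The paper stays at the level of a semiprime PI algebra and uses only the fact that a nonzero ideal of such a ring cannot be nil: this yields a non-nilpotent element $d=\sum a_ibc_i$ of $RbR$, and the affine subalgebra must then be enlarged to $k\langle x,y,b,a_1,c_1,\dots,a_t,c_t\rangle$ so that $d$ witnesses that $R'bR'$ is not nil, whence $b=xy-yx\notin J(R')$. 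You instead pass to a prime quotient, invoke Posner's theorem and a splitting field to embed it $L$-densely in $M_n(L)$, and use a Zariski-density argument to find a single commutator $[x,y]$ that is itself non-nilpotent, so that your affine subalgebra is just $k\langle x,y\rangle$. Your method buys a two-generator affine algebra and a transparent reason why the commutator avoids the nil radical, at the cost of heavier machinery (Posner, splitting fields, density of a spanning $k$-subspace, Rowen's theorem); the paper's version is more elementary and self-contained. Two minor points: the density of $T$ in $M_n(L)$ rests on the observation that a $k$-subspace which $L$-spans $L^m$ is Zariski dense when $k$ is infinite --- true, but worth a line --- and your final detour through $\bar A=A/Q$ and Rowen's theorem is unnecessary, since $J(A)$ is nil and $c=[x,y]$ is not nilpotent, so $c\notin J(A)$ and $A$ has a noncommutative primitive quotient directly, which is exactly how the paper concludes.
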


\begin{proof} By (\ref{subfactor}), we may assume without loss of generality that
  $R$ is a noncommutative semiprime PI algebra over $k$. Choose
   $x, y \in R$ such that $b := xy-yx \ne 0$. 

  Next, since $R$ is semiprime and PI, it follows that $RbR$ cannot be
  a nil ideal (see, eg., \cite[13.2.6i]{McC-Rob}). Consequently, there
  exist $a_1, c_1, a_2, c_2, \ldots, a_t, c_t \in R$ such that
%%%%%%%%%%%%%%%%%%%%%%%%%%%%%%%%%%%%%%%%%%%5
\[ d := a_1 b c_1 + a_2 b c_2 + \cdots + a_t b c_t \]
%%%%%%%%%%%%%%%%%%%%%%%%%%%%%%%%%%%%%%%%%%%%%%
is not nilpotent.

Set $R' = k\langle x, y, b, a_1, c_1, \ldots, a_t, c_t \rangle$, a
$k$-affine (not necessarily semiprime) PI algebra. Since $d$ is not
nilpotent, the ideal $R'bR'$ of $R'$ is not nil. But it was proved by
Amitsur that the Jacobson radical of a $k$-affine PI algebra must be
nil \cite{Ami}. (Nilpotency was later established by Braun
\cite{Bra}.)  Therefore, $R'bR'$ cannot be contained in the Jacobson
radical of $R'$. Hence $xy-yx$ is not contained in the Jacobson
radical of $R'$, and we can conclude that $R'$ is noncommutative
modulo its Jacobson radical.

We now know that there exists at least one primitive ideal $P'$ of
$R'$ such that $R'/P'$ is noncommutative, and so by Kaplansky's
Theorem (see, e.g., \cite[13.3.8]{McC-Rob}), $R'/P'$ must be a
noncommutative central simple algebra, finite dimensional over a field
extension of $k$. Therefore, by (\ref{alternative}), $R'/P'$ contains a
free subsemisubgroup and, by (\ref{subfactor}), $R$ contains a free
subsemigroup.
\end{proof}

\begin{thm} \label{nonart} Let $R$ be a $k$-algebra with at least one
  nonartinian primitive subquotient. Then $R$ contains a free
  subsemigroup.
\end{thm}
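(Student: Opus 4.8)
The plan is to reduce, via the Jacobson density theorem, to the matrix case already handled in Note~\ref{alternative}. By hypothesis there is a $k$-subalgebra $A \subseteq R$ together with a surjection $\varphi \colon A \to \A$ onto a primitive $k$-algebra $\A$ that is not artinian. By Lemma~\ref{subfactor} it therefore suffices to exhibit a free subsemigroup inside $\A$.

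Being primitive, $\A$ acts faithfully and densely on a right vector space $V$ over a division ring $D = \operatorname{End}_{\A}(V)$, and since $\A$ is a $k$-algebra, $D$ is a division $k$-algebra with $k$ central (multiplication by $\lambda \in k$ commutes with every $\A$-linear endomorphism of $V$). The crucial point is that $V$ must be infinite-dimensional over $D$: were $\dim_D V = n < \infty$, density would force $\A \cong \operatorname{End}_D(V) \cong M_n(D)$, which is artinian, contradicting the hypothesis. In particular $V$ contains a $2$-dimensional $D$-subspace $W$.

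I would then pass to the stabilizer $B = \{a \in \A : aW \subseteq W\}$, a $k$-subalgebra of $\A$, and consider the restriction homomorphism $B \to \operatorname{End}_D(W) \cong M_2(D)$ given by $a \mapsto a|_W$. Density makes this surjective: for any $f \in \operatorname{End}_D(W)$, choosing a $D$-basis $v_1, v_2$ of $W$, density provides $a \in \A$ with $a v_i = f(v_i)$, so that $aW \subseteq W$ and $a|_W = f$. Thus $B$ admits a surjection onto the $2 \times 2$ matrix algebra over the division $k$-algebra $D$.

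Finally, Note~\ref{alternative} guarantees that $M_2(D)$ contains a free subgroup, hence a free subsemigroup; Lemma~\ref{subfactor} applied to $B \subseteq \A$ then produces a free subsemigroup in $\A$, and the initial reduction carries it back to $R$. The step I expect to require the most care is the use of the nonartinian hypothesis: it enters precisely in forcing $\dim_D V = \infty$, which is exactly what makes the stabilizer $B$ available together with the surjectivity of its restriction map. One must also keep the right $D$-module structure straight, so that $M_2(D)$ (equivalently $M_2(D^{\mathrm{op}})$, still a matrix algebra over a division $k$-algebra) genuinely appears as a quotient to which Note~\ref{alternative} applies.
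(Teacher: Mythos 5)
Your proposal is correct and follows essentially the same route as the paper: reduce to a nonartinian primitive algebra, use the nonartinian hypothesis to get infinite $D$-dimension, and apply the Jacobson Density Theorem to produce a subalgebra surjecting onto a matrix algebra over a division $k$-algebra, to which Note~\ref{alternative} and Lemma~\ref{subfactor} apply. The only difference is that you spell out the stabilizer-and-restriction construction explicitly, whereas the paper delegates it to the citation of \cite[11.19]{Lam}.
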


\begin{proof} To start, assume that $R$ is both left primitive and not
  artinian.  Let $M$ be a simple faithful left $R$-module, and
  let $E = \text{End}_RM$.  Since $R$ is not artinian, it follows that
  $M$ has infinite length as a right $E$-module. Also, for any given
  integer $n > 1$, it follows from the Jacobson Density Theorem that
  there exists a $k$-subalgebra $A$ of $R$ equipped with a surjective
  homomorphism $\varphi\colon A \rightarrow M_n(E)$; see, e.g.,
  \cite[11.19]{Lam}. Again by (\ref{alternative}), the subalgebra
  $M_n(k)$ of $M_n(E) = \varphi(A)$ must contain a free
  subsemigroup. The theorem now follows from (\ref{subfactor}).
\end{proof}

\begin{rem} Observe that the preceding two results (\ref{PI} and
  \ref{nonart}) verify Klein's conjecture when $R$ is a nonartinian
  primitive domain or when $R$ is a noncommutative PI domain. Moreover,
  those two results (especially when $k$ is countable) establish the
  existence of free subsemigroups in numerous well known classes of
  algebras not covered in the prior literature. Indeed, algebras with
  a nonartinian primitive subquotient or that are semiprime and PI are
  commonplace among iterated Ore extensions (cf.~\cite{McC-Rob}),
  enveloping algebras of Lie algebras (cf.~\cite{Dix}), group algebras
  (cf.~\cite{Pas-two}), quantum groups (cf.~\cite{Bro-Goo}), and
  algebras arising in nocommutative algebraic geometry
  (cf.~\cite{BRSS}).
\end{rem}

We can also use (\ref{nonart}) to show that Makar-Limanov's
conjecture, over $k$, is equivalent to an \emph{a priori} more general
statement.

\begin{cor} \label{equiv} The following two statements are equivalent: {\rm (i)}
  Every noncommutative division algebra over $k$ contains a free
  subsemigroup. {\rm (ii)} Every $k$-algebra
  noncommutative modulo its Jacobson radical contains a free
  subsemigroup.
\end{cor}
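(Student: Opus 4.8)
The plan is to handle the two implications separately, with (i) $\Rightarrow$ (ii) carrying essentially all the content. The implication (ii) $\Rightarrow$ (i) should be immediate: a noncommutative division algebra $D$ over $k$ has no nonzero proper one-sided ideals, so its Jacobson radical is zero; thus $D$ is (trivially) noncommutative modulo its Jacobson radical, and (ii) then supplies a free subsemigroup.

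For (i) $\Rightarrow$ (ii), let $R$ be noncommutative modulo its Jacobson radical $J(R)$. First I would isolate a single noncommutative primitive quotient. Since $R/J(R)$ is semiprimitive, $J(R)$ equals the intersection of the primitive ideals of $R$; if every primitive quotient $R/P$ were commutative, then every additive commutator $ab - ba$ would lie in each such $P$ and hence in $J(R)$, forcing $R/J(R)$ to be commutative and contradicting the hypothesis. So I fix a primitive ideal $P$ with $R/P$ noncommutative. By (\ref{subfactor}), applied to the quotient map $R \to R/P$, it then suffices to produce a free subsemigroup in $\R := R/P$.

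The remaining step is a case analysis on the noncommutative primitive algebra $\R$. If $\R$ is not artinian, then it is its own nonartinian primitive subquotient, and (\ref{nonart}) applies directly. Otherwise $\R$ is primitive artinian, hence simple artinian, so Artin--Wedderburn gives $\R \cong M_n(D)$ for some $k$-division algebra $D$ and some $n \geq 1$. If $n > 1$, then $\R$ contains the matrix subalgebra $M_n(k)$, which already contains a free subsemigroup by (\ref{alternative}). If $n = 1$, then $\R \cong D$ is a noncommutative division algebra over $k$, which is precisely the case covered by hypothesis (i). In every case $\R$ contains a free subsemigroup, and (\ref{subfactor}) finishes the proof.

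I do not anticipate a serious obstacle, since the genuine difficulty has already been absorbed into (\ref{nonart}) and (\ref{alternative}); the role of this corollary is just to observe that, once the Jacobson radical is removed, the only primitive quotient not already handled by those results is a noncommutative division algebra. The single point requiring care is the commutator argument above, which is what guarantees that noncommutativity modulo $J(R)$ genuinely survives into an \emph{individual} primitive quotient rather than merely being spread across the subdirect decomposition.
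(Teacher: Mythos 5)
Your proposal is correct and follows essentially the same route as the paper: reduce via Lemma~\ref{subfactor} to a noncommutative primitive quotient, dispose of the nonartinian case with Theorem~\ref{nonart}, handle $M_n(D)$ with $n>1$ via the copy of $M_n(k)$ and (\ref{alternative}), and leave the division-ring case to hypothesis (i). You merely spell out in more detail the commutator argument justifying the passage to a single noncommutative primitive quotient, which the paper leaves implicit.
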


\begin{proof} It suffices to prove that (i) implies (ii). So assume
  (i), and let $R$ be a $k$-algebra noncommutiative modulo its
  Jacobson radical. To prove the corollary it will suffice to show, as
  follows, that $R$ contains a free subsemigroup: First, by
  (\ref{subfactor}) we can assume without loss of generality that $R$
  is primitive and noncommutative. Next, by (\ref{nonart}), we can
  further reduce to the case when $R$ is simple artinian. Now, if $R$
  has rank $n > 1$, then $R$ contains a copy of $M_n(k)$ and so
  contains a free subsemigroup by (\ref{alternative}). It remains only
  to consider the case when $R$ is a division ring, which is exactly
  (i).
\end{proof}

We conclude with a modest application to algebras over uncountable fields.

\begin{cor} \label{uncount} Assume that $k$ is uncountable and that
  $R$ is noncommutative modulo its Jacobson radical. Then $R$ contains
  a free subsemigroup.
\end{cor}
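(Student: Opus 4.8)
The plan is to deduce this directly from Corollary \ref{equiv}, which already reduces the general statement to the division-algebra case. That corollary shows that statement (i)---every noncommutative division algebra over $k$ contains a free subsemigroup---implies statement (ii)---every $k$-algebra noncommutative modulo its Jacobson radical contains a free subsemigroup---and the present hypothesis on $R$ is precisely that it is noncommutative modulo its Jacobson radical. So the entire task reduces to verifying statement (i) under the additional assumption that $k$ is uncountable.

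To verify (i), I would take an arbitrary noncommutative division algebra $D$ over $k$ and observe that its center $Z(D)$ contains the image of $k$, and is therefore uncountable. Since a division algebra is in particular a domain, $D$ is a noncommutative domain with uncountable center, and so the existence of a free subsemigroup in $D$ is exactly Klein's theorem \cite{Kle}. (Alternatively, since $D$ is a division algebra over the uncountable field $k$, one may invoke Makar-Limanov's result \cite{Mak} instead; either citation suffices.) This establishes statement (i).

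Having confirmed (i), Corollary \ref{equiv} yields (ii), which is the desired conclusion that $R$ contains a free subsemigroup. I expect no genuine obstacle in this argument: all of the substantive work has been front-loaded into Corollary \ref{equiv} and into the cited theorems of Klein and Makar-Limanov. The only point requiring care is the routine observation that uncountability of $k$ forces uncountability of the center of any $k$-division algebra, which is what places us squarely within the hypotheses of the earlier results; everything else is an immediate appeal to Corollary \ref{equiv}.
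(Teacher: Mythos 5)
Your proposal is correct and matches the paper's argument: the paper likewise deduces the corollary from Corollary \ref{equiv} together with the known result that a division algebra over an uncountable field contains a free subsemigroup, citing Makar-Limanov \cite{Mak}. Your additional remark that Klein's theorem \cite{Kle} could be cited instead (via the uncountability of the center) is a harmless variant of the same step.
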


\begin{proof} As noted above, it was proved in \cite{Mak} that a
  division algebra over an uncountable field contains a free
  subsemigroup. The corollary now follows from (\ref{equiv}).
\end{proof}

\begin{Ack} I am grateful to Ken Brown for his very useful comments 
 on an earlier draft of this note, and in particular for his
 suggestion to (and how to) reduce to the affine case in (\ref{PI}).
\end{Ack}

%%%%%%%%%%%%%%%%%%%%%%%%%%%%%%%%%%%-End-Body-%%%%%%%%%%%%%%%%%%%%%%%%%%%%%%%%%%

\end{document}